\newcommand{\W}{\mathcal{TW}}
\newcommand{\X}{\mathcal{G}}
\def\nmin{s}
\def\Nmin{s}
\def\Hypergeometric1F1{${}_{1}F_{1}$}
\def\sgn{\text{sgn}}
\def\aappx{\mathsf{\alpha}}
\def\kappx{\mathsf{k}}
\def\tappx{\mathsf{\delta}}
\def\x{\theta}
\def\X{\Theta}
\def\betareg{{\mathcal{B}}}
\def\I{{\mathcal{E}}}
\def\p{\mathsf{p}}
\def\n{\mathsf{n}}
\def\m{\mathsf{m}}
\def\gammareg{{{P}}}
\newtheorem{theorem}{Theorem}
\journal{JMVA}
\begin{document}

\begin{acronym}
\scriptsize
\acro{AcR}{autocorrelation receiver}
\acro{ACF}{autocorrelation function}
\acro{ADC}{analog-to-digital converter}
\acro{AWGN}{additive white Gaussian noise}
\acro{BCH}{Bose Chaudhuri Hocquenghem}
\acro{BEP}{bit error probability}
\acro{BFC}{block fading channel}
\acro{BPAM}{binary pulse amplitude modulation}
\acro{BPPM}{binary pulse position modulation}
\acro{BPSK}{binary phase shift keying}
\acro{BPZF}{bandpass zonal filter}
\acro{CD}{cooperative diversity}
\acro{CDF}{cumulative distribution function}
\acro{CCDF}{complementary cumulative distribution function}
\acro{CDMA}{code division multiple access}
\acro{c.d.f.}{cumulative distribution function}
\acro{ch.f.}{characteristic function}
\acro{CIR}{channel impulse response}
\acro{CR}{cognitive radio}
\acro{CSI}{channel state information}
\acro{DAA}{detect and avoid}
\acro{DAB}{digital audio broadcasting}
\acro{DS}{direct sequence}
\acro{DS-SS}{direct-sequence spread-spectrum}
\acro{DTR}{differential transmitted-reference}
\acro{DVB-T}{digital video broadcasting\,--\,terrestrial}
\acro{DVB-H}{digital video broadcasting\,--\,handheld}
\acro{ECC}{European Community Commission}
\acro{ELP}{equivalent low-pass}
\acro{FCC}{Federal Communications Commission}
\acro{FEC}{forward error correction}
\acro{FFT}{fast Fourier transform}
\acro{FH}{frequency-hopping}
\acro{FH-SS}{frequency-hopping spread-spectrum}
\acro{GA}{Gaussian approximation}
\acro{GPS}{Global Positioning System}
\acro{HAP}{high altitude platform}
\acro{i.i.d.}{independent, identically distributed}
\acro{IFFT}{inverse fast Fourier transform}
\acro{IR}{impulse radio}
\acro{ISI}{intersymbol interference}
\acro{LEO}{low earth orbit}
\acro{LOS}{line-of-sight}
\acro{BSC}{binary symmetric channel}
\acro{MB}{multiband}
\acro{MC}{multicarrier}
\acro{MF}{matched filter}
\acro{m.g.f.}{moment generating function}
\acro{MI}{mutual information}
\acro{MIMO}{multiple-input multiple-output}
\acro{MISO}{multiple-input single-output}
\acro{MRC}{maximal ratio combiner}
\acro{MMSE}{minimum mean-square error}
\acro{M-QAM}{$M$-ary quadrature amplitude modulation}
\acro{M-PSK}{$M$-ary phase shift keying}
\acro{MUI}{multi-user interference}
\acro{NB}{narrowband}
\acro{NBI}{narrowband interference}
\acro{NLOS}{non-line-of-sight}
\acro{NTIA}{National Telecommunications and Information Administration}
\acro{OC}{optimum combining}
\acro{OFDM}{orthogonal frequency-division multiplexing}
\acro{p.d.f.}{probability distribution function}
\acro{PAM}{pulse amplitude modulation}
\acro{PAR}{peak-to-average ratio}
\acro{PDP}{power dispersion profile}
\acro{p.m.f.}{probability mass function}
\acro{PN}{pseudo-noise}
\acro{PPM}{pulse position modulation}
\acro{PRake}{Partial Rake}
\acro{PSD}{power spectral density}
\acro{PSK}{phase shift keying}
\acro{QAM}{quadrature amplitude modulation}
\acro{QPSK}{quadrature phase shift keying}
\acro{r.v.}{random variable}
\acro{R.V.}{random vector}
\acro{SEP}{symbol error probability}
\acro{SIMO}{single-input multiple-output}
\acro{SIR}{signal-to-interference ratio}
\acro{SISO}{single-input single-output}
\acro{SINR}{signal-to-interference plus noise ratio}
\acro{SNR}{signal-to-noise ratio}
\acro{SS}{spread spectrum}
\acro{TH}{time-hopping}
\acro{ToA}{time-of-arrival}
\acro{TR}{transmitted-reference}
\acro{UAV}{unmanned aerial vehicle}
\acro{UWB}{ultrawide band}
\acro{UWBcap}[UWB]{Ultrawide band}
\acro{WLAN}{wireless local area network}
\acro{WMAN}{wireless metropolitan area network}
\acro{WPAN}{wireless personal area network}
\acro{WSN}{wireless sensor network}
\acro{WSS}{wide-sense stationary}

\acro{SW}{sync word}
\acro{FS}{frame synchronization}
\acro{BSC}{binary symmetric channels}
\acro{LRT}{likelihood ratio test}
\acro{GLRT}{generalized likelihood ratio test}
\acro{LLRT}{log-likelihood ratio test}
\acro{$P_{EM}$}{probability of emulation, or false alarm}
\acro{$P_{MD}$}{probability of missed detection}
\acro{ROC}{receiver operating characteristic}
\acro{AUB}{asymptotic union bound}
\acro{RDL}{"random data limit"}
\acro{PSEP}{pairwise synchronization error probability}

\acro{SCM}{sample covariance matrix}

\acro{PCA}{principal component analysis}

\end{acronym}

\begin{frontmatter}
\title{Distribution of the largest root of a matrix for Roy's test in multivariate analysis of variance 
}
\author[label1]{Marco~Chiani}
\address[label1]{ 
    DEI, University of Bologna\\
    V.le Risorgimento 2, 40136 Bologna, ITALY\\
    marco.chiani@unibo.it
}

\begin{abstract}
Let ${\bf X, Y} $ denote two independent real Gaussian $\p \times \m$ and $\p \times \n$ matrices with $\m, \n \geq \p$, each constituted by zero mean \acl{i.i.d.} columns with common covariance.
 The Roy's largest root criterion, used in multivariate analysis of variance (MANOVA), is based on the statistic of the largest eigenvalue, $\X_1$, of ${\bf{(A+B)}}^{-1} \bf{B}$, where ${\bf A =X X}^T$ and ${\bf B =Y Y}^T$ are independent central Wishart matrices. 
We  derive a new expression and efficient recursive formulas for the exact distribution of $\X_1$. The expression can be easily calculated even for large parameters, eliminating the need of pre-calculated tables for the application of the Roy's test. 
\end{abstract}


\begin{keyword}
Roy's test \sep Random Matrices \sep multivariate analysis of variance (MANOVA) \sep characteristic roots \sep largest eigenvalue \sep Tracy-Widom distribution \sep Wishart matrices.
\end{keyword}
\end{frontmatter}

\section{Introduction}
\label{sec:intro}

The joint distribution  of $s$ non-null eigenvalues of a multivariate real beta matrix in the null case can be written in the form \cite[page 112]{Mui:B82}, \cite[page 331]{And:03}, 
\begin{equation}\label{eq:jpdfuncorrnodet}
f(\x_{1}, \ldots, \x_{\Nmin}) = C(s,m,n) \,
     \prod_{i=1}^{\Nmin} \x_{i}^{m} (1-\x_{i})^n
    \cdot    \prod_{i<j}^{\Nmin}
    \left(\x_{i}-\x_{j}\right) 
\end{equation}
where $1 > \x_{1} \geq \x_{2} \cdots \geq \x_{\Nmin} > 0$, and $C(s,m,n)$ is a normalizing constant given by
\begin{equation}
C=C(s,m,n) = \pi ^{s/2} \prod _{i=1}^s  \frac{ \Gamma \left(\frac{i+2 m+2 n+s+2}{2} \right)}{ \Gamma
   \left(\frac{i}{2}\right) \Gamma \left(\frac{i+2 m+1}{2} \right) \Gamma \left(\frac{i+2 n+1}{2} \right)}\, .
\label{eq:C}
\end{equation}
This distribution arises in multivariate analysis of variance (MANOVA) and, with the notation introduced above, is the distribution of the eigenvalues of ${\bf (A+B)^{-1}B}$ with parameters
\begin{equation}\label{eq:par}
s=\p, \qquad m=(\n-\p-1)/2 , \qquad  n=(\m-\p-1)/2 \,.
\end{equation}
The marginal distribution of the largest eigenvalue, $\X_1$, is of basic importance in testing hypotheses and constructing confidence regions in multivariate analysis of variance (MANOVA) according to the Roy's largest root criterion 
(see  e.g. \cite[page 333]{And:03} and references therein), and is generally considered difficult to compute. For this reason, extensive studies have produced tables of upper percentage points for few specific (small) values of $s$ and some combinations of $m, n$ (see e.g. \cite{Hec:60, Pil:67}, \cite[Table B.4]{And:03}
). The  most efficient numerical algorithm to compute the \ac{CDF} of $\X_1$ is provided in \cite{But:11} based on \cite{GupRic:85}, but for nonintegers $m$ or $n$ it requires infinite series expansions which can results in computational time of several hours. 
 Approximations based on the Tracy-Widom distribution are discussed in \cite{Joh:09}.

In this paper we derive a simple expression for the exact \ac{CDF} of $\X_1$  for arbitrary $s, n, m$, and an iterative algorithm for its fast evaluation. The algorithm needs only the incomplete beta function, and does not rely on numerical integration or series expansion. For instance, all results in \cite[Table B.4]{And:03} or in \cite[Table 1]{Joh:09} can be easily computed instantaneously\footnote{On a current desktop computer in less than $0.1$ seconds.}; even for the most challenging cases  analyzed in literature 
 \cite[Table 1]{But:11}, which with previous methods required hours of computational time,  no more than a second is needed. Finally, we discuss some approximations based on the Tracy-Widom distribution and its approximation \cite{Joh:09,Chi:J14}.

We remark that we study the case of real matrices: the complex analogous of our problem, i.e., the case where ${\bf X, Y} $ are independent complex Gaussian, is much easier and has been solved in \cite{Kha:64}. 

\smallskip
Throughout the paper we indicate with $\Gamma(.)$ the gamma function, with $B(a,b)$  the  beta function, with $\betareg\left(x;a,b\right)=\int_{0}^{x} t^{a-1} (1-t)^{b-1} dt$ the  incomplete (lower) beta function \cite[Ch. 6]{AbrSte:B70}, and with $|\cdot|$ the determinant.


\section{Exact distribution of the largest eigenvalue for multivariate beta matrices in the null case}
\label{sec:exact}

\mbox{ }

The following is the main result of the paper.

\begin{theorem}
\label{th:cdfwishartreal} 
The \ac{CDF} of the largest eigenvalue $\X_1$ for a multivariate beta matrix in the null case is:
\begin{equation}
\label{eq:cdfwishartreal}
F_{\X_1}(\x_1)=\Pr\left\{\X_1 \leq \x_1\right\}=C \,   \sqrt{\left|{\bf A}(\x_1)\right|} \,.
\end{equation}
When $\nmin$ is even, the elements of the $\nmin \times \nmin$ skew-symmetric matrix ${\bf A}(\x_1)$  are:
\begin{equation}
\label{eq:aij}
a_{i,j}(\x_1)=
\I\left(\x_1; m+j,m+i\right)-\I\left(\x_1; m+i,m+j\right) \qquad i,j=1,\ldots,\nmin
\end{equation}
where
\begin{equation}
\label{eq:I}
\I(x;a,b)\triangleq \int_{0}^{x} t^{a-1} (1-t)^n \, \betareg\left(t; b,n+1 \right)dt .
\end{equation}
When $\nmin$ is odd, the elements of the $(\nmin+1) \times (\nmin+1)$ skew-symmetric matrix ${\bf A}(x_1)$ are as in \eqref{eq:aij}, with the additional elements
\begin{eqnarray}  \label{eq:aijodd}
  a_{i,\nmin+1}(\x_1)&=&\betareg\left(\x_1; m+i,n+1 \right)  \qquad i=1, \ldots, \nmin  \\
\label{eq:aijodd1} a_{\nmin+1,j}(\x_1)&=&-a_{j,\nmin+1}(\x_1)  \qquad\qquad j=1, \ldots, \nmin 
\\   
a_{\nmin+1,\nmin+1}(\x_1)&=&0 
\end{eqnarray}
\noindent 
Note that $a_{i,j}(\x_1)=-a_{j,i}(\x_1)$ and $a_{i,i}(\x_1)=0$.

\medskip
\noindent 
Moreover, the elements $a_{i,j}(\x_1)$ can be computed iteratively, starting from the beta function, without numerical integration or series expansion.
\end{theorem}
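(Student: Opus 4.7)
The plan is to derive \eqref{eq:cdfwishartreal} by recognizing that the joint density \eqref{eq:jpdfuncorrnodet} is of the type to which de Bruijn's Pfaffian integration identity applies. First I would absorb the weight $\prod_i \x_i^{m}(1-\x_i)^{n}$ into a Vandermonde determinant by writing
\begin{equation*}
\prod_{i=1}^{\nmin}\x_i^{m}(1-\x_i)^{n}\prod_{i<j}(\x_i-\x_j)
= \pm\,\det\bigl[\phi_i(\x_j)\bigr]_{i,j=1}^{\nmin},
\qquad \phi_i(\x)\teq \x^{m+i-1}(1-\x)^{n},
\end{equation*}
(the sign being determined by the Vandermonde orientation). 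The CDF $F_{\X_1}(\x_1)$ is then $C$ times the integral of this determinant over the simplex $0<\x_\nmin<\cdots<\x_1<\x_1$.

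Next I would invoke de Bruijn's lemma. For even $\nmin$, it converts the ordered multiple integral into a Pfaffian,
\begin{equation*}
\int\limits_{0<\x_\nmin<\cdots<\x_1<\x_1}\!\!\det[\phi_i(\x_j)]\,d\x_1\cdots d\x_\nmin
= \operatorname{Pf}\bigl[M_{ij}(\x_1)\bigr],
\end{equation*}
with $M_{ij}(\x_1)=\int_0^{\x_1}\!\!\int_0^{\x_1}\sgn(v-u)\,\phi_i(u)\phi_j(v)\,du\,dv$. Splitting the sign function, $M_{ij}$ becomes the difference of two iterated integrals on $\{0<u<v<\x_1\}$; performing the inner integral produces the incomplete beta function $\betareg(v;m+i,n+1)$, giving
\begin{equation*}
M_{ij}(\x_1)=\int_0^{\x_1}\!\phi_j(v)\,\betareg(v;m+i,n+1)\,dv
-\int_0^{\x_1}\!\phi_i(v)\,\betareg(v;m+j,n+1)\,dv,
\end{equation*}
which matches \eqref{eq:aij} and \eqref{eq:I} exactly. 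For odd $\nmin$ the standard augmentation trick appends a row/column with entries $\int_0^{\x_1}\phi_i(u)\,du=\betareg(\x_1;m+i,n+1)$, reproducing \eqref{eq:aijodd}. Using $\operatorname{Pf}(A)^2=|A|$ for a skew-symmetric $A$, and choosing the positive square root (since $F_{\X_1}$ is nonnegative), yields \eqref{eq:cdfwishartreal}.

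For the final claim on iterative computation, I would expand $(1-u)^n$ binomially or, better, apply integration by parts to $\I(x;a,b)$ to obtain a two-index recursion that shifts $a$ and $b$ by one at the cost of a pure beta-function evaluation; initializing with the closed form for $\betareg$ at small arguments terminates the recursion in finitely many steps, avoiding numerical quadrature or series expansion. The main obstacle is bookkeeping: ensuring that the Vandermonde sign, the choice of Pfaffian branch, and the ordering convention in de Bruijn's identity all combine so that the constant $C$ in \eqref{eq:C} appears without a spurious sign, and that the recursion for $\I$ is stable and finite in the non-integer $m,n$ regime where previous methods required infinite series.
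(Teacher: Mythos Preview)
Your proposal is correct and follows essentially the same route as the paper: rewrite the joint density via the Vandermonde determinant with $\phi_i(x)=x^{m+i-1}(1-x)^n$, apply de Bruijn's Pfaffian identity over $[0,\x_1]$, reduce the double integrals to the function $\I$ by integrating the inner variable to an incomplete beta, and handle odd $\nmin$ by the standard border row/column. For the recursive part the paper likewise uses integration by parts (giving the incomplete-beta shift $\betareg(x;a+1,b)=\tfrac{a}{a+b}\betareg(x;a,b)-\tfrac{x^a(1-x)^b}{a+b}$ and three simple identities for $\I$), so your ``or, better, integration by parts'' is exactly the right choice---the binomial expansion of $(1-u)^n$ would fail for non-integer $n$ and should be dropped.
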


\begin{proof}
The proof is based on the approach introduced in \cite{Chi:J14} for Wishart and GOE matrices.

Denoting $\xi(x)=x^m (1-x)^n$, ${\bf x} = \left[x_{1}, x_{2}, \ldots, x_{\Nmin} \right]$, and with ${\bf V}({\bf x})=\left\{x_j^{i-1}\right\}$ the Vandermonde matrix, 
  we have for the eigenvalues in ascending order
\begin{equation}
\label{eq:freversed}
 f (x_{\Nmin}, \ldots, x_{1})=C  \prod_{i<j} (x_j-x_i) \prod_{i=1}^{\Nmin} \xi(x_i)= C  \left|{\bf V}({\bf x})\right| \prod_{i=1}^{\Nmin} \xi\left(x_i\right) 
 \end{equation}
 where now  $0 < x_1 \leq \cdots \leq x_{\Nmin} <1$. 

The \ac{CDF} of the largest eigenvalue is  then
\begin{eqnarray}
F_{\X_1}(\x_1)&=&\underset{{0 \leq x_{1} < \ldots < x_{\Nmin} \leq \x_1} }{\int\ldots\int} f(x_{\Nmin}, \ldots, x_{1}) d{\bf x} \\
&=&C \underset{{0 \leq x_1 < \ldots < x_{\Nmin} \leq \x_1}}{\int\ldots\int}   \left|{\bf V}({\bf x})\right|
     \prod_{i=1}^{\Nmin} \xi\left(x_i\right)  d{\bf x} \,. \label{eq:th1proof}
\end{eqnarray}

To evaluate this integral we recall that for a generic $s \times s$ matrix ${\bf \Phi}({\bf x})$ with elements $\left\{\Phi_i(x_j)\right\}$ the following identity holds \cite{Deb:55}  
\begin{equation}
\label{eq:debru}
\underset{{a \leq x_{1} < \ldots < x_{\Nmin} \leq  b} }{\int\ldots\int}  \left|{\bf \Phi}({\bf x})\right|  d{\bf x} = \text{Pf}\left({\bf A}\right)
\end{equation}
where $\text{Pf}\left({\bf A}\right)=\sqrt{\left|{\bf A}\right|}$ is the Pfaffian, and the skew-symmetric matrix $\bf A$ is $s \times s$ for $s$ even, and $(s+1) \times (s+1)$ for $s$ odd, with 
\begin{equation}
\label{eq:aijdebru}
a_{i,j}=\int_a^b \int_a^b \sgn(y-x) \Phi_i(x) \Phi_j(y) dx dy \qquad i,j = 1, \ldots, s .
\end{equation}
For $s$ odd the additional elements are $a_{i,s+1}=-a_{s+1,i}=\int_a^b \Phi_i(x) dx$, $i=1, \ldots, s$, and $a_{s+1,s+1}=0$.

Using \eqref{eq:debru} in \eqref{eq:th1proof} with $a=0, b=\x_1, \Phi_i(x)=x^{i-1} \xi(x)=x^{i-1} x^m (1-x)^n$ with some simple manipulations gives Theorem \ref{th:cdfwishartreal}.

%
%
\smallskip
To avoid the numerical integration in \eqref{eq:I}, we first observe that the incomplete beta functions can be computed iteratively by the relation
\begin{equation}
\label{eq:betarecursive}
\betareg(x; a+1, b)=\frac{a}{a+b} \betareg(x; a, b)-  \frac{x^a (1-x)^{b}}{a+b}.
\end{equation}
This relation is obtained for example with integration by parts of $\int t^a(1-t)^{b-1} dt=\int f(t) g'(t) dt$ with $f(t)=t^a-t^{a+1}$, $g'(t)=(1-t)^{b-2}$.

Then, from \eqref{eq:I} and \eqref{eq:betarecursive} the following identities can be easily verified:
\begin{eqnarray}
\I(x;a,a)&=& \frac{1}{2} \betareg\left(x; a, n+1 \right)^2  \label{eq:rec1} \\
\displaystyle \I(x;a,b+1)&=&b \, \frac{\I(x; a,b)}{b+n+1}-\frac{\betareg(x; a+b,2n+2)}{b+n+1}    \label{eq:rec2} \\
\I(x;b,a)&=& \betareg\left(x;a,n+1 \right) \betareg\left(x;b,n+1 \right) -\I(x;a,b) \,. \label{eq:rec3}
\end{eqnarray}
Therefore, the elements of the matrix in  \eqref{eq:cdfwishartreal} can be evaluated iteratively without any numerical integration.
\end{proof}
Theorem \ref{th:cdfwishartreal} can thus be used for an efficient computation of the exact \ac{CDF} of the largest eigenvalue for Roy's test.  
In fact, using \eqref{eq:rec1}, \eqref{eq:rec2}, and \eqref{eq:rec3}, the \ac{CDF} in \eqref{eq:cdfwishartreal} can be simply evaluated, without numerical integrations or series expansion, by Algorithm \ref{alg} reported below. 

\begin{algorithm}[!ht]
\renewcommand{\algorithmicrequire}{\textbf{Input:}}
\renewcommand{\algorithmicensure}{\textbf{Output:}}
\begin{algorithmic}[0]
\REQUIRE $\nmin, m, n, \x_1$
\renewcommand{\algorithmicrequire}{\textbf{Needed:}}
\REQUIRE
 \STATE Incomplete beta function $\betareg\left(x; a, b \right)$ 
 
\ENSURE $F_{\X_1}(\x_1)=\Pr\left\{\X_1 \leq \x_1\right\}$
 \STATE ${\bf A}={\bf 0}$
 
   \FOR{$i = 1 \to \nmin$}
  	\STATE $ \displaystyle b_{i} = \betareg(\x_1; m+i,n+1)^2/2$ 
  	\FOR {$j = i \to \nmin-1$}
  		\STATE $ \displaystyle b_{j+1}=\frac{m+j}{m+j+n+1}  b_{j}-\frac{\betareg(\x_1; 2m+i+j,2n+2)}{m+j+n+1} $ 
		\STATE $ \displaystyle a_{i,j+1}=\betareg\left(\x_1;m+i,n+1 \right) \betareg\left(\x_1;m+j+1,n+1 \right)-2 b_{j+1}$
  	\ENDFOR
  \ENDFOR
  \IF{$\nmin$ is odd}
     \STATE Append one column to ${\bf A}$ with $a_{i,s+1}=\betareg\left(\x_1;m+i,n+1 \right), \, i=1, \ldots, s$
     \STATE Append one zero row to ${\bf A}$ 
  \ENDIF
  \STATE ${\bf A}={\bf A}-{\bf A}^T$
   \RETURN $F_{\X_1}(\x_1)= C(s,m,n) \, \sqrt{|{\bf A}|}$
\end{algorithmic}
\caption{Algorithm: \ac{CDF} of the largest eigenvalue for Roy's test}
\label{alg}
\end{algorithm}
%

\noindent
Implementing directly the algorithm in Mathematica on a personal computer, for each value $x_1$ we obtain the exact \ac{CDF} in \eqref{eq:cdfwishartreal} in less than $0.1$ seconds for all tables in \cite{Pil:67}, \cite[Table B.4]{And:03} and \cite[Table 1]{Joh:09}. 
For $s=54, m=-1/2, n=45/2$, which corresponds to the most challenging case analyzed in literature \cite[Table 1, last row]{But:11}, the exact \ac{CDF} is computed with the new expression in less than a second. For comparison, the approach based on series expansions of the hypergeometric function of a matrix argument requires hours. For larger parameters for which no other methods are available in literature, like for example $s=200, m=-1/2, n=299/2$, the computational time is less than fifteen seconds on a common  personal computer. Therefore, the upper percentage points for Roy's test can be the evaluated exactly and almost instantaneously for parameters of interest in applied statistics.

\section{Approximations}
In \cite{Joh:09} it is shown that the logit of $\X_1$ approaches the Tracy-Widom law for large $s$. More precisely, it is shown that, for $m \geq -1/2, n \geq 0$ and large $s$, the following approximation holds 
\begin{equation}\label{eq:logit}
\frac{\log\left(\X_1/(1-\X_1)\right)-\mu}{\sigma} \overset{{\mathcal{D}}}{\approx} \W_1
\end{equation}
where $\W_{1}$ denotes a \acl{r.v.} with Tracy-Widom distribution of order $1$ \cite{TraWid:94,TraWid:96,Joha:00,Joh:01,TraWid:09}, and the values of $\mu, \sigma$ are given, using the parameters in \eqref{eq:par}, by \cite{Joh:08,Joh:09}
\begin{eqnarray}
\mu &=& 2 \log \tan \left(\frac{\gamma +\phi }{2}\right) \\
\sigma^3&=&  \frac{16}{(\m+\n-1)^2} \frac{1}{\sin^2(\gamma +\phi) \sin\gamma \sin\phi }\\
\gamma&=&\arccos\left(\frac{\m+\n-2\p}{\m+\n-1}\right) \\ 
\phi&=&\arccos\left(\frac{\m-\n}{\m+\n-1}\right)  \,.
\end{eqnarray}
Moreover, in \cite{Chi:J14} it is shown that the $\W_1$ can be closely approximated by a shifted gamma distribution, so that 
\begin{eqnarray}
\label{eq:Fbetaappx}
F_{1}(x) \triangleq \Pr(\W_1 \leq x) &\simeq& 
   \gammareg\left(\kappx,\frac{x+\aappx}{\tappx}\right) \qquad\qquad\qquad x>-\aappx  \\ 
x = F^{-1}_{1}(y) &\simeq& \tappx \, \gammareg^{-1}(\kappx,y)-\aappx \label{eq:F1invbetaappx}
\end{eqnarray}
where  $\gammareg(a,x)$ is the regularized lower incomplete gamma function, $\gammareg^{-1}(a,y)$ is its  inverse, and the constants $\kappx=46.446, \tappx=0.186054, \aappx=9.84801$ have been chosen to match the moments of the approximation to that of the Tracy-Widom. 

Thus, 
 using \eqref{eq:Fbetaappx} in \eqref{eq:logit} we obtain for the \ac{CDF} of the Roy's statistic in the null case
\begin{equation}\label{eq:FX1appx}
F_{\X_1}(\x_1) \simeq  \gammareg\left(\kappx, \frac{{\log\left(\x_1/(1-\x_1)\right)-\mu} +{\sigma} \aappx}{ \tappx \sigma}\right) 
\end{equation}
 and for its inverse, useful for evaluating the percentiles,
\begin{equation}\label{eq:FX1invappx}
\x_1 = F^{-1}_{\X_1}(y) \simeq  \frac{\exp\left\{\sigma [\tappx \gammareg^{-1}(\kappx,y)-\aappx]+\mu \right\}}{1+\exp\left\{\sigma [\tappx \gammareg^{-1}(\kappx,y)-\aappx]+\mu \right\}} \,.
\end{equation}
%
An example comparing exact (by Algorithm 1) with approximate (by \eqref{eq:FX1appx}) distributions is reported in Fig. \ref{fig:cdfbeta}.
\begin{figure}[h]
\centerline{\includegraphics[width=0.9\columnwidth,draft=false] {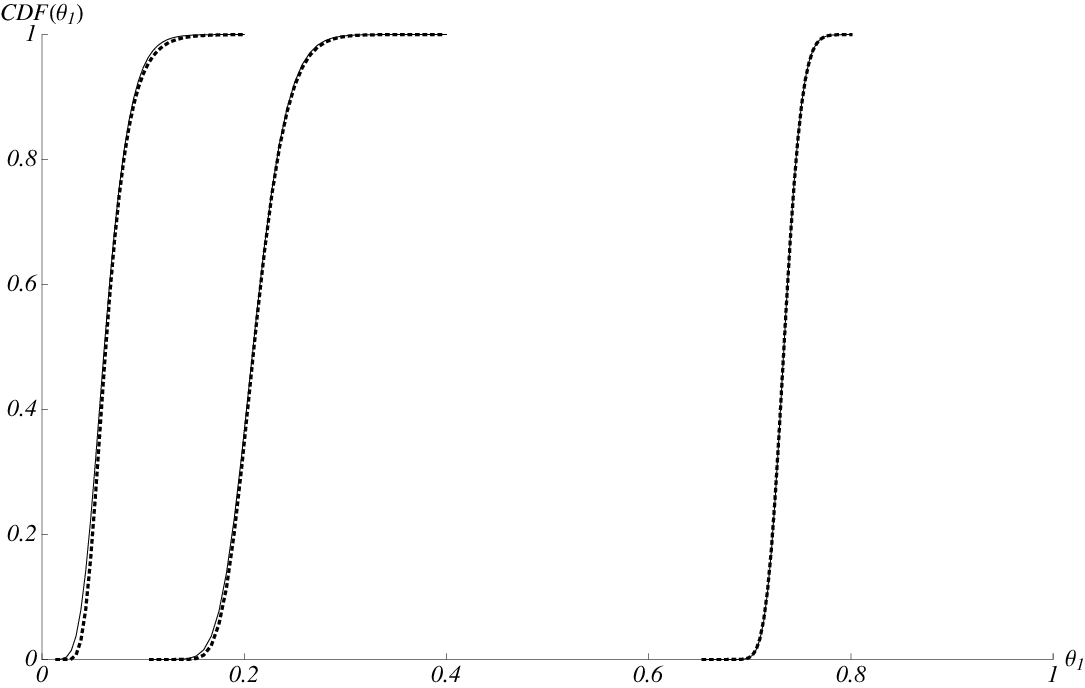}}
\caption{\ac{CDF} of the largest eigenvalue, real beta matrix, $m=-1/2, n=100$. From left to right: $\nmin=5, 15, 100$. Comparison between the exact distribution \eqref{eq:cdfwishartreal} (solid line) and the approximation in \eqref{eq:FX1appx} (dotted). Note that for $\nmin=100$ the two curves are almost indistinguishable.} \label{fig:cdfbeta}
\end{figure}

While Theorem 1 with Algorithm 1 gives the exact distribution, the two approximations above, which are in simple closed forms, can be used for a rapid, 
 approximate design of the Roy's test.  For instance, with $s=5, m=-1/2, n=1000$ the 80th-percentile obtained by root-finding with the exact CDF of Algorithm 1 is $\x_1=0.008501$, while \eqref{eq:FX1invappx} gives $\x_1=0.008609$. For larger matrices and larger values of the \ac{CDF} the approximation generally improves. For example, with 
 $s=200, m=-1/2, n=299/2$ the 99th-percentile obtained by Algorithm 1 is $\x_1=0.827760$, while \eqref{eq:FX1invappx} gives $\x_1=0.827761$.
%


 \section{Remarks for the complex multivariate beta}
 
 For completeness we recall that, when ${\bf X, Y} $ are two independent {\emph{complex}} Gaussian, the analogous of \eqref{eq:jpdfuncorrnodet} is the complex multivariate beta, where the joint distribution of the eigenvalues is \cite{Kha:64}
\begin{equation}\label{eq:jpdfuncorrnodetcomplex}
f(\x_{1}, \ldots, \x_{\Nmin}) = C'(s,m,n) \,
     \prod_{i=1}^{\Nmin} \x_{i}^{m} (1-\x_{i})^n
    \cdot    \prod_{i<j}^{\Nmin}
    \left(\x_{i}-\x_{j}\right)^2 
\end{equation}
where $1 > \x_{1} \geq \x_{2} \cdots \geq \x_{\Nmin} > 0$, and $$C'(s,m,n)=\prod _{i=1}^s  \frac{ \Gamma \left({m+ n+s+i}\right)}{ \Gamma
   \left({i}\right) \Gamma \left({i+m} \right) \Gamma \left({i+n} \right)} \,.$$ 
In this case we can write 
\begin{equation} \label{eq:jpdfuncorrcomplex}
f(\x_{1}, \ldots, \x_{\Nmin}) = C'(s,m,n) \left|{\bf V}({\bf \x})\right|^2  \prod_{i=1}^{\Nmin} \xi(\x_{i})  \,.
\end{equation}
Therefore, by applying \cite[Corollary 2]{ChiWinZan:J03} the \ac{CDF} of $\X_1$ in the complex case is simply given by \cite{Kha:64}
\begin{equation}
\label{eq:cdfwishartcomplex}
F_{\X_1}(\x_1)=\Pr\left\{\X_1 \leq \x_1\right\}=C'(s,m,n) \,   \left|{\bf A}(\x_1)\right| \,
\end{equation}
%
where the elements of the $\nmin \times \nmin$ matrix ${\bf A}(\x_1)$  are:
\begin{equation}
\label{eq:aijcomplex}
a_{i,j}(\x_1)=
\betareg(\x_1, m+i+j-1,n+1) \qquad i,j=1,\ldots,\nmin \,.
\end{equation}
Moreover, expressions analogous to \eqref{eq:Fbetaappx} and \eqref{eq:FX1invappx} can be derived, by using the 
 Tracy-Widom limiting behavior for  $\X_1$ in the complex case \cite{Joh:08} and the approximation of the Tracy-Widom of order 2 with a shifted gamma \cite{Chi:J14}. 

\appendix

\bibliographystyle{elsarticle-num-names-alpha}
\bibliography{BiblioMCCV,RandomMatrix,MyBooks}

\end{document}